\def\P{\mbox{P}}
\theoremstyle{plain} \newtheorem{theorem}{Theorem}
\theoremstyle{plain} \newtheorem{corollary}{Corollary}
\begin{document}
\title{Decision-theoretic justifications for Bayesian hypothesis testing using credible sets}

\author
{ M{\aa}ns Thulin$^{1}$}
\date{}

\maketitle

\footnotetext[1]{Department of Mathematics, Uppsala University, Box 480, 751 06 Uppsala, Sweden.\\Phone: +46(0)184713389; E-mail: thulin@math.uu.se}


\begin{abstract}
\noindent In Bayesian statistics the precise point-null hypothesis $\theta=\theta_0$ can be tested by checking whether $\theta_0$ is contained in a credible set. This permits testing of $\theta=\theta_0$ without having to put prior probabilities on the hypotheses. While such inversions of credible sets have a long history in Bayesian inference, they have been criticised for lacking decision-theoretic justification.

We argue that these tests have many advantages over the standard Bayesian tests that use point-mass probabilities on the null hypothesis. We present a decision-theoretic justification for the inversion of central credible intervals, and in a special case HPD sets, by studying a three-decision problem with directional conclusions. Interpreting the loss function used in the justification, we discuss when test based on credible sets are applicable.

We then give some justifications for using credible sets when testing composite hypotheses, showing that tests based on credible sets coincide with standard tests in this setting.

\noindent 
   \\ {\bf Keywords:} Bayesian inference; credible set; confidence interval; decision theory; directional conclusion; hypothesis testing; three-decision problem.
\end{abstract}

\section{Introduction}\label{intro}
The first step of the standard solution to Bayesian testing of the point-null (or sharp, or precise) hypothesis $\theta\in\Theta_0=\{\theta_0\}$ is to assign a prior probability to the hypothesis. This is necessary if one wishes to use e.g. tests based on Bayes factors, as an absolutely continuous prior for $\theta$ would yield $\P(\theta\in\Theta_0|x)=0$ regardless of the data $x$. We shall refer to this procedure, described in detail e.g. by \citet[Section 5.2.4]{ro2}, as the standard test or the standard solution in the following. 
A continuous prior can however be utilized in a simpler alternative strategy, in which the evidence against $\Theta_0$ is evaluated indirectly by using inverted credible sets for testing. In this procedure a credible set $\Theta_c$, such that $\P(\theta\in\Theta_c|x)\geq 1-\alpha$, is computed and the null hypothesis is rejected if $\theta_0\notin\Theta_c$. Using credible sets for testing avoids the additional complication of imposing explicit probabilities on the hypotheses, avoids Lindley's paradox and allows for the use of non-informative priors.

While it has been argued that tests of point-null hypotheses are unnatural from a Bayesian point of view, they are undeniably of considerable practical interest (Berger \& Delampady, 1987; Ghosh et al., 2006, Section 2.7.2; Robert, 2007, Section 5.2.4). Inverting credible sets is arguably the simplest way to test such hypotheses. For this reason, inverting credible sets as a means to test hypotheses has long been, and continues to be, a part of Bayesian statistics. Tests of this type have been studied by \citet{bt1}, \citet{hsu1}, \citet{kim1}, \citet{dr1} and \cite{kr1}, among many others. \citet[Section 10.2]{ze1} credited this procedure to \citet[Section 5.6]{li1}, who in turn credited it to \citet{je1}. Lindley suggested that it may be useful when the prior information is vague or diffuse, with the caveat that the distribution of $\theta$ must be resonably smooth around $\theta_0$ for the inversion to be sensible. \citet[Section 3.4]{ko1} motivated this procedure by its resemblance to frequentist methods. \citet[Section 2.7]{gh1} described the inversion of credible sets as ``a very informal way of testing''. Zellner remarked that ``no decision theoretic justification appears available for this procedure''.

The purpose of this short note is to discuss such justifications and to put inverted credible sets on a decision-theoretic footing, making them formal Bayesian tests. These justifications shed light on when tests based on credible sets should be used. Throughout the text we assume that the parameter space $\Theta\subseteq \mathbb{R}$ and that the posterior distribution of $\theta$ is proper and absolutely continuous. 

We will study three types of credible sets. A \emph{highest posterior density (HPD) set} contains all $\theta$ for which the posterior density is greater than some $k_\alpha$. Its appeal lies in the fact that among all $1-\alpha$ credible sets, the HPD set has the shortest length.

With $q_{\alpha}(\theta|x)$ denoting the posterior quantile function of $\theta$, such that $\P(\theta< q_{\alpha}(\theta|x))=\alpha$, the interval $(q_{\alpha/2}(\theta|x),q_{1-\alpha/2}(\theta|x))$ is called a $1-\alpha$ \emph{central interval} (also known as an equal-tailed, symmetric or probability centred interval). Central intervals are often used due to the fact that they are computationally simpler than HPD sets, especially when the HPD set is not connected. They are also more in line with frequentist practice.

A credible set is a \emph{credible bound} (or a one-sided credible interval) if it is of the form $\{\theta:\theta\leq q_{1-\alpha}(\theta|x)\}$ or $\{\theta:\theta\geq q_{\alpha}(\theta|x)\}$. Such sets are of interest when a lower or upper bound for the unknown parameter $\theta$ is desired.

First, we consider tests of the point-null hypothesis $\Theta_0=\{\theta_0\}$. We begin by comparing test based on credible sets compared to the standard solution with point-mass priors in Section \ref{criticism}, arguing that the former have many advantages. We then review a decision-theoretic justification of inverted HPD sets proposed by \citet{mew1} in Section \ref{pereira}. This justification involves a loss function that is non-standard in that it depends on the sample $x$. In Section \ref{central} we propose a justification of inverted central intervals by recasting the problem of testing $\Theta_0$ in a three-decision setting. This allows us to use a weighted 0--1 loss to justify tests based on central intervals, avoiding losses that depend on $x$. In Section \ref{which} we argue that test using central intervals are preferable to tests using HPD sets. We then turn our attention to tests of composite hypotheses in Section \ref{composite} and show that inverting credible sets lead to standard tests in this setting.


\section{Testing point-null hypotheses}\label{pointnull}

\subsection{Contrasting the standard solution and credible sets}\label{criticism}
Consider tests of the point-null hypothesis $\Theta_0=\{\theta_0\}$ against $\Theta_1=\{\theta:\theta\neq\theta_0\}$. We will discuss two problems associated with the standard solution to point-null hypothesis testing; the use of mixture priors and its poor frequentist properties; that are overcome by using inverted credible sets instead. We then discuss the main drawback of tests based on credible sets, namely the lack of direct measures of evidence. In the below, $\pi_0>0$ denotes the prior probability of $\Theta_0$ in the standard solution, with $\pi_1=1-\pi_0$ being the prior probability of $\Theta_1$.

\emph{1. The use of mixture priors in the standard test.}
One could argue, as \citet[Section 2]{bes1}, that when using the standard solution it is in general reasonable to have $\pi_0\geq 1/2$ so that $\Theta_0$ is not rejected because of its low prior probability. This is common in practice. It is however much more sensible to adjust the loss function if stronger evidence is required before $\Theta_0$ is rejected. The prior distribution should reflect the prior beliefs (or lack of them) and not the severity of rejecting $\Theta_0$. Choosing a large $\pi_0$ corresponds to using a prior with a sharp spike close to $\theta_0$. Such priors are only reasonable if there is a very strong prior belief in the null hypothesis, and cannot be used in an analysis where at least some degree of objectivity is desired.

When lacking prior information or striving for complete objectivity, it is common practice to use a prior that in some sense is non-informative or objective. The point-null hypothesis is often a simplification of the hypothesis that $|\theta-\theta_0|<\epsilon$ for some small $\epsilon$, in which case $\pi_0=\P(|\theta-\theta_0|<\epsilon)$. If $\P(|\theta-\theta_0|<\epsilon)$ is computed under an objective prior on $\theta$, $\pi_0$ will invariably be extremely small. The ``objective'' prior $\pi_0=1/2$ is in fact heavily biased towards $\Theta_0$.

\citet[Section 2]{bes1} claim that a point-null test only makes sense to a Bayesian if the prior actually has a sharp spike near $\theta_0$. But it is perfectly reasonable to formulate a hypothesis before constructing an informative prior or to require that an objective prior is used when no prior information is available. It is furthermore reasonable to require that the prior distribution can be used for more than one type of statistical decision, e.g. both estimation and hypothesis testing; the prior should reflect prior beliefs and not the type of decision that one is interested in.

\emph{2. The poor frequentist performance of the standard test.} There are many situations in which it is reasonable to require that a statistical procedure works well from both a Bayesian and a frequentist perspective, particularly when an objective analysis is desirable. A well-known complication with the standard solution to testing point-null hypotheses is the asymptotic discrepancy between Bayesian and frequentist analysis that is known as Lindley's paradox \citep{li2}, in which, for any fixed prior, $\P(\theta\in\Theta_0|x)$ goes to 1 as the sample size increases for values of a test statistic that correspond to a fixed (small) $p$-value. Thus a frequentist and a Bayesian will reach different conclusions as more and more data is collected. This is another example of how the standard Bayesian test favours the null hypothesis.

There is no such discrepancy for tests based on credible sets. On the contrary, when based on non-informative priors, credible sets tend to have favourable properties when treated as frequentist confidence intervals; see e.g. \citet{bcd1}; and hypothesis testing using a credible set is often a valid frequentist procedure.

\emph{3. Measures of evidence.}
The main argument against the use of credible sets for testing is that they do not utilize $\P(\theta\in\Theta_0|x)$, and so does not really measure the amount of evidence for and against $\Theta_0$. Alas, the first part of this argument seems weak, considering the contradictions and paradoxes that the standard tests utilizing $\P(\theta\in\Theta_0|x)$ give rise to. It should also be noted that a test can be fully Bayesian without being based on $\P(\theta\in\Theta_0|x)$ as long at it depends only on the posterior distribution. Tests based on inverted credible sets have this property.

While credible sets do not measure the evidence against $\Theta_0$ directly, there are indirect measures associated with such tests. With $T(x)$ being the largest HPD set not containing $\theta_0$, \citet{ps1} proposed $\P(\theta\notin T(x)|x)$, i.e. the smallest $\alpha$ for which $\theta_0$ is not contained in the $1-\alpha$ HPD set, as a measure of evidence against $\Theta_0$, with values close to $0$ indicating that $\Theta_0$ is false. Similar in spirit to the frequentist $p$-value, this is a measure of how far out in the tails of the posterior distribution $\theta_0$ is. With $S(x)$ being the largest central interval not containing $\theta_0$, one can similarly use $\P(\theta\notin S(x)|x)$ as a measure of evidence. When $T(x)$ and $S(x)$ are viewed as frequentist confidence intervals, these measures of evidence coincide with the $p$-values of the corresponding two-sided tests. This stands in sharp contrast to the evidence measure $\P(\theta\in\Theta_0|x)$ utilized in the standard solution, which is irreconcilable with $p$-values \citep{bes1}.


\subsection{The Madruga--Esteves--Wechsler loss and HPD sets}\label{pereira}
\citet{ps1} proposed, along with the $p$-value-like measure of evidence mentioned above, a test of $\Theta_0=\{\theta_0\}$ that is equivalent to inverting HPD sets, dubbing it the Full Bayesian Significance Test. See also \citet{mps1} and \citet{psw1}. None of the aforementioned papers discuss the historical background of this test, briefly outlined in Section \ref{intro} of the present paper.

Let $\pi(\cdot)$ denote the \emph{a priori} density function of $\theta$ and define $T(x)=\{ \theta:\pi(\theta|x)>\pi(\theta_0|x)\}.$ $T(x)$ is the most credible HPD set not containing $\theta_0$. The Pereira--Stern test rejects $\Theta_0$ when $\P(\theta\notin T(x)|x)$ is "small" ($<0.05$, say). In other words, $\Theta_0$ is rejected at the $5~\%$ level if and only if it is not contained in the $95~\%$ HPD set.

Madruga, Esteves and Wechsler (2001) provided a decision-theoretic justification for the Pereira-Stern test, that is non-standard in that the loss function depends on $x$. Let the test function $\varphi$ be $0$ if $\Theta_0$ is accepted and $1$ if $\Theta_0$ is rejected. The Madruga-Esteves-Wechsler loss is:
$$
L_{MEW}(\theta,\varphi,x) = \begin{cases} a(1-\mathbb{I}(\theta\in T(x)), & \mbox{if } \varphi(x)=1 \\
b+c\mathbb{I}(\theta\in T(x)), & \mbox{if } \varphi(x)=0, \end{cases}
$$
with $a,b,c>0$. Minimization of the expected posterior loss leads to the Pereira--Stern test, where $\Theta_0$ is rejected if $\P(\theta\notin T(x)|x)<(b+c)/(a+c)$.
As an example, the test resulting from the choice $a=0.975$ and $b=c=0.025$ is equivalent to inverting the 95 \% HPD set.

The controversial part of this decision-theoretic justification of inverting HPD sets is that the loss function depends on $x$. While such loss functions have appeared in the literature a few times; \citet{mew1} give \citet{ka1} and \citet[Section 6.1.4]{bs1} as examples; they do not appear to be widely accepted. Indeed, choosing the loss function after $x$ has been observed is arguably not entirely in line with classic decision theory. \citet{psw1} argue that an advantage with this type of loss is that it allows the statistician to include his or her embarrassment over accepting $\Theta_0$ when $\theta_0$ is not in a particular HPD set. Such loss functions seems applicable in highly subjective analyses, but should be avoided when objectivity is desirable. As we will show next, tests based on central intervals can, in contrast, be justified using a standard loss functions.


\subsection{Central intervals}\label{central}
 When testing $\Theta_0$, credible (and confidence) intervals are in practice often used for directional conclusions. If $\theta_0$ is above (or below) the credible interval, it is common practice to reject $\Theta_0$ and to conclude that $\theta$ is larger (or smaller) than $\theta_0$.

Next, we provide a decision-theoretic justification of the inversion of central intervals that uses a standard loss function not involving $x$, and for which the directional conclusions are fully valid. A justification for the use of HPD sets is obtained in a special case as a corollary.

We formally reformulate the problem of testing the point-null hypothesis $\Theta_0=\{\theta_0\}$ as a three-decision problem with directional conclusions. $\Theta_0$ is then tested against both $\Theta_{-1}=\{\theta:\theta<\theta_0\}$ and $\Theta_{1}=\{\theta:\theta>\theta_0\}$. Such problems have previously been studied by for instance \citet{jt1} and \citet{jo1} in the frequentist setting and \citet{ba2} and \citet{ba1} in the Bayesian setting.

\begin{theorem}\label{centthm}
Let $\Theta_0=\{\theta_0\}$, $\Theta_{-1}=\{\theta:\theta<\theta_0\}$ and $\Theta_{1}=\{\theta:\theta>\theta_0\}$. Let $\varphi$ be a decision function that takes values in $\{-1,0,1\}$, such that we accept $\Theta_i$ if $\varphi(x)=i$. Under an absolutely continuous prior for $\theta$ and the loss function
\[
L_{(1)}(\theta,\varphi) = \begin{cases} 0, & \mbox{if } \theta\in\Theta_i\mbox{ and }\varphi=i, \quad i\in \{-1,0,1\}, \\
\alpha/2, & \mbox{if } \theta\notin\Theta_0 \mbox{ and }\varphi=0,\\
1, & \mbox{if } \theta\in\Theta_{i}\cup\Theta_0 \mbox{ and }\varphi=-i,\quad i\in\{-1,1\},\end{cases}
\]
with $0<\alpha<1$, the Bayes test is to reject $\Theta_0$ if $\theta_0$ is not contained in the central $1-\alpha$ credible set.
\end{theorem}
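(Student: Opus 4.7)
The plan is to compute the posterior expected loss for each of the three possible decisions $\varphi\in\{-1,0,1\}$, compare them, and show that the minimizer is $\varphi=0$ exactly when $\theta_0$ lies in the central $1-\alpha$ interval.

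First I would set $p_{-1}=\P(\theta<\theta_0\mid x)$ and $p_1=\P(\theta>\theta_0\mid x)$. Because the posterior is assumed absolutely continuous, $\P(\theta=\theta_0\mid x)=0$, so $p_{-1}+p_1=1$. Reading the loss table:
\begin{itemize}
\item For $\varphi=0$, the loss equals $0$ on $\Theta_0$ and $\alpha/2$ on $\Theta_{-1}\cup\Theta_1$, giving posterior expected loss $(\alpha/2)(p_{-1}+p_1)=\alpha/2$.
\item For $\varphi=1$, the loss equals $0$ on $\Theta_1$ (case with $i=1$) and $1$ on $\Theta_{-1}\cup\Theta_0$ (case with $i=-1$, $\varphi=-i=1$), giving posterior expected loss $p_{-1}$.
\item For $\varphi=-1$, symmetrically, the posterior expected loss is $p_1$.
\end{itemize}

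The Bayes decision is the $\varphi$ achieving the minimum of $\{\alpha/2,\,p_{-1},\,p_1\}$. Hence $\varphi=0$ is optimal if and only if $\alpha/2\le p_{-1}$ and $\alpha/2\le p_1$, equivalently
\[
\alpha/2\;\le\;\P(\theta<\theta_0\mid x)\;\le\;1-\alpha/2,
\]
which, using the posterior quantile function, is precisely $q_{\alpha/2}(\theta\mid x)\le \theta_0\le q_{1-\alpha/2}(\theta\mid x)$, i.e.\ $\theta_0$ belongs to the central $1-\alpha$ interval. Conversely, if $\theta_0$ lies above the interval then $p_1<\alpha/2\le p_{-1}$, so the Bayes decision is $\varphi=-1$; if $\theta_0$ lies below the interval then the Bayes decision is $\varphi=1$. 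This delivers the directional conclusion automatically.

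There is no real obstacle: the argument is a direct minimization of a three-term posterior risk, and the only input beyond bookkeeping is the observation that absolute continuity kills the point mass at $\theta_0$, which is what makes the expected loss for $\varphi=0$ collapse to the clean value $\alpha/2$ and thereby matches up with the central quantile threshold. Ties between the three values occur only on a posterior-null set of $x$ and can be broken arbitrarily without affecting the conclusion.
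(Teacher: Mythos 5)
Your proof is correct and follows essentially the same route as the paper: compute the posterior expected loss of each of the three decisions (using absolute continuity to reduce the loss of $\varphi=0$ to $\alpha/2$ and the losses of $\varphi=\pm1$ to the posterior probabilities of the opposite directional alternatives), then translate the minimization into the quantile condition $\alpha/2\le \P(\theta<\theta_0\mid x)\le 1-\alpha/2$. Your explicit remarks on the directional conclusions and on tie-breaking are harmless additions; no gaps.
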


\begin{proof}
The expected posterior loss is
\[\begin{split}
{\rm E}(L_{(1)}(\theta,\varphi)|x)&={\rm E}\Big(\P(\theta\in\Theta_1|x)\mathbb{I}_{\{-1\}}(\varphi)+\P(\theta\in\Theta_{-1}|x)\mathbb{I}_{\{1\}}(\varphi)\\&\qquad\qquad\qquad\qquad\qquad+\alpha/2\cdot\P(\theta\notin\Theta_0|x)\mathbb{I}_{\{0\}}(\varphi)\Big|x\Big)\\
&={\rm E}\Big(\P(\theta\in\Theta_1|x)\mathbb{I}_{\{-1\}}(\varphi)+\P(\theta\in\Theta_{-1}|x)\mathbb{I}_{\{1\}}(\varphi)+\alpha/2\cdot\mathbb{I}_{\{0\}}(\varphi)|x\Big),
\end{split}\]
since $\P(\theta\notin\Theta_0|x)=1$ because of the absolute continuity of $\theta$. The posterior expected losses are therefore ${\rm E}(L_{(1)}(\theta,0)|x)=\alpha/2$ and ${\rm E}(L_{(1)}(\theta,i|)x)=\P(\theta\in\Theta_i|x)$ for $i\in\{-1,1\}$. It follows that the Bayes decision rule is to accept $\Theta_0$ if
$\alpha/2<\min_{i\in\{-1,1\}}\P(\theta\in\Theta_i|x)$,
or, equivalently, if
$1-\alpha/2>\max_{i\in\{-1,1\}}\P(\theta\in\Theta_i|x)$.
But $\P(\theta\in\Theta_{-1}|x)>1-\alpha/2$ if and only if the upper bound $q_{1-\alpha/2}(\theta|x)<\theta_0$, in which case $\theta_0$ is not contained in the central interval. Similarly, $\P(\theta\in\Theta_{1}|x)>1-\alpha/2$ if and only if the lower bound $q_{\alpha/2}(\theta|x)>\theta_0$. Thus $\Theta_0$ is accepted if and only if $\theta$ is contained in the $1-\alpha$ central interval.
\end{proof}
It is not uncommon that the posterior density is unimodal and symmetric, typical cases of this being normal and Student's $t$ posteriors. In this case, the central interval coincides with the HPD set. The following corollary is therefore immediate.
\begin{corollary}
If $\pi(\theta|x)$ is absolutely continuous, symmetric and unimodal, the Bayes test under $L_{(1)}$ is to reject $\Theta_0$ if $\theta_0$ is not contained in the $1-\alpha$ HPD set.
\end{corollary}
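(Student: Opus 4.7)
The plan is simply to reduce the corollary to Theorem~\ref{centthm} by showing that under the stated hypotheses the central $1-\alpha$ credible interval coincides with the $1-\alpha$ HPD set. Since Theorem~\ref{centthm} already identifies the Bayes test under $L_{(1)}$ with the inversion of the central interval, this coincidence immediately yields the conclusion.

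To establish the coincidence, I would let $\mu$ denote the center of symmetry of $\pi(\theta|x)$. Symmetry of the density about $\mu$ implies symmetry of the posterior distribution function about $\mu$, and hence the quantiles satisfy $q_{\alpha/2}(\theta|x)+q_{1-\alpha/2}(\theta|x)=2\mu$. Thus the central interval is of the form $(\mu-d,\mu+d)$ for some $d>0$, chosen so that this interval has posterior probability $1-\alpha$. On the other hand, unimodality together with symmetry forces the mode to be at $\mu$ and the density $\pi(\theta|x)$ to be a non-increasing function of $|\theta-\mu|$. Consequently, for any level $k$, the super-level set $\{\theta:\pi(\theta|x)>k\}$ is an interval symmetric about $\mu$, so the HPD set is of the form $(\mu-c,\mu+c)$ for some $c>0$, with $c$ chosen so that this interval also has posterior probability $1-\alpha$.

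Two symmetric intervals about $\mu$ with the same posterior probability must coincide (by absolute continuity of $\pi(\theta|x)$, the probability content is a strictly increasing function of the half-length), so $c=d$ and the two sets are equal. Invoking Theorem~\ref{centthm} then completes the proof.

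There is no real obstacle here: the argument is a short combination of symmetry and unimodality with the identification provided by Theorem~\ref{centthm}. The only point that requires a moment's care is ensuring that absolute continuity rules out pathological ties on the boundary of the HPD set, so that the posterior mass of $(\mu-r,\mu+r)$ is a strictly increasing, continuous function of $r$, justifying the uniqueness step.
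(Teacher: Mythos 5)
Your proposal is correct and follows exactly the route the paper takes: the paper's justification is the one-line remark that for a symmetric, unimodal posterior the central interval coincides with the HPD set, so the corollary is immediate from Theorem~\ref{centthm}. You simply supply the details of that coincidence (symmetry of the quantiles about the mode, super-level sets being symmetric intervals, and uniqueness of the symmetric interval with given mass), which the paper leaves implicit.
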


Typically $\alpha\leq 0.1$ in $L_{(1)}$, in which case the evidence for either $\Theta_{-1}$ or $\Theta_1$ has to be quite large before $\Theta_0$ is rejected. This is a conservative type of analys\-is, where the statistician is somewhat reluctant to reject $\Theta_0$. Examples of situations where such a loss is applicable include many investigations of hypotheses in science, circumstances where falsely rejecting $\Theta_0$ leads to considerable economical losses and legal and forensic questions (this loss is in line with the classic \emph{in dubio pro reo} principle).

The posterior probability of the null hypothesis is not evaluated in this test (it is always 0), but the probabilities of the two alternative hypotheses are. By the process of elimination, $\Theta_0$ is accepted if neither $\Theta_{-1}$ nor $\Theta_1$ has a high enough posterior probability. $\Theta_0$ is rejected if $\theta_0$ is far out in the tails of the posterior distribution, making these tests quite similar to frequentist hypothesis testing.


\subsection{Reasons to use central intervals rather than HPD sets for testing}\label{which}

For historical reasons, test based on HPD sets are much more common in the literature than tests using central intervals; confer the references given in Section \ref{intro} for examples. However, from a decision-theoretic perspective, the somewhat tautological use of the HPD set $T(x)$ in the loss function $L_{MEW}$ seems artifical in comparison to the easy-to-interpret loss function $L_{(1)}$. The $L_{MEW}$--justification of tests using HPD sets involves a good deal more subjectivity than does the justification of tests using central intervals. In most situations, the latter test seems to be preferable when a test with a decision-theoretic justification is desirable.

A further argument for prefering inverted central intervals is given by studying \emph{how} the two types of tests rejects the null hypothesis.
\begin{figure}
\begin{center}
 \caption{Power of two-sided tests of the hypothesis $H_0: \sigma^2=1$, where $\sigma^2$ is the variance of the normal distribution with known mean, based on 95 \% credible sets using the Jeffreys prior.}\label{fig1}
   \includegraphics[width=\textwidth]{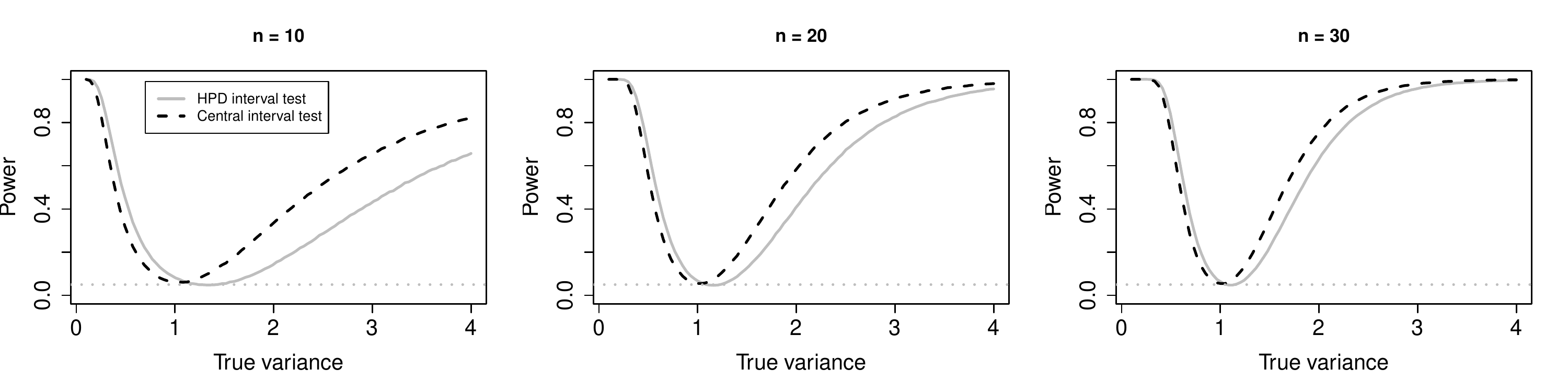}
 \end{center}
\end{figure}
For $\theta\in\mathbb{R}$, when $\Theta_0=\{\theta_0\}$ is rejected, there is always an implicit directional aspect to this decision: the null hypothesis is rejected because it seems more likely that $\theta$ belongs to either $\Theta_{-1}=\{\theta:\theta<\theta_0\}$ or $\Theta_{1}=\{\theta:\theta>\theta_0\}$. When the posterior is skewed, HPD sets are, by construction, biased towards one of these sub-alternatives. Such directional rejection biases can be avoided by using inverted central intervals for testing. 

When the frequentist properties of test based on HPD and central intervals are evaluated in some common point-null testing problems for scale or rate parameters, it is seen that this rejection bias causes the HPD set test to have worse power properties than the central interval test. For inference about the scale/rate parameters in the normal, gamma, inverse gamma and Weibull distributions using the corresponding Jeffreys priors, the two tests have comparable power for $\theta<\theta_0$, while the test using the central interval has higher power for $\theta>\theta_0$; sometimes substantially so. Examples are given in Figures \ref{fig1} and \ref{fig2}; the power functions for the other cases are similar. In general, the central interval test also seems to come closer to attaining the nominal size in this setting.

A similar issue occurs when the posterior density is monotone, as is for instance the case when using the conjugate Pareto prior for $\theta$ in $U(0,\theta)$. In this setting the HPD set is an upper or lower confidence bound, meaning that it only is possible to reject $\Theta_0$ in one direction, so that the resulting test in fact is one-sided.

\begin{figure}
\begin{center}
 \caption{Power of two-sided tests of the hypothesis $H_0: \lambda=1$, where $\lambda$ is the rate parameter of the exponential distribution, based on 95 \% credible sets  using the Jeffreys prior.}\label{fig2}
   \includegraphics[width=\textwidth]{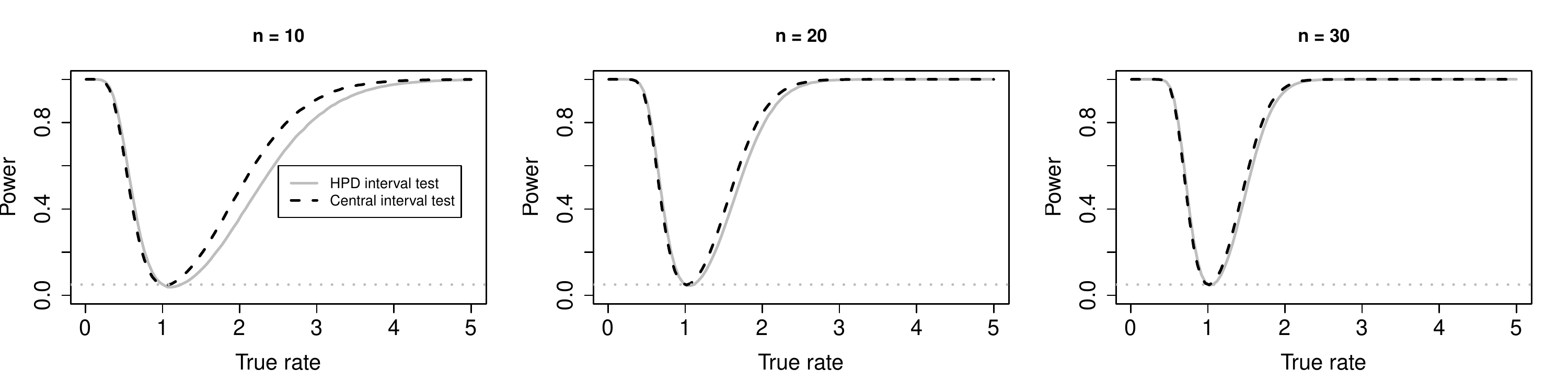}  
\end{center}
\end{figure}



\section{Testing composite hypotheses}\label{composite}
When performing tests of composite hypotheses, there is typically no discrepancy between tests based on credible set and the standard Bayesian tests. This will be illustrated with two examples below. For $\Theta_0$ and $\Theta_1$ being uncountable subsets of $\mathbb{R}$, both with positive probabilities under an absolutely continuous prior, we will consider the weighted 0--1 loss function

\[
L_{(2)}(\theta,\varphi) = \begin{cases} 0, & \mbox{if } \varphi=1-\mathbb{I}_{\Theta_0}(\theta) \\
a, & \mbox{if } \theta\in\Theta_0 \mbox{ and }\varphi=1\\
b, & \mbox{if } \theta\in\Theta_1 \mbox{ and }\varphi=0. \end{cases}
\]

First, we consider one-sided composite hypotheses. The proof of the following theorem is in analogue with that of Theorem \ref{centthm} and is therefore omitted.

\begin{theorem}\label{thm3}
For the hypotheses $\Theta_0=\{\theta:\theta\leq \theta_0\}$ and $\Theta_1=\{\theta:\theta>\theta_0\}$, with $\P(\Theta_0), \P(\Theta_1)>0$, let $\varphi$ be a test function, such that $\Theta_0$ is rejected if $\varphi=1$ and accepted if $\varphi=0$. Under an absolutely continuous prior for $\theta$ and the loss function $L_{(2)}$ with $a=1-\alpha$ and $b=\alpha$, $0<\alpha<1$, the Bayes test is to reject $\Theta_0$ if and only if $\theta_0$ is not contained in the lower-bound credible set $\{\theta:\theta\geq q_{\alpha}(\theta|x)\}$, or, equivalently, if and only if $\P(\theta\in\Theta_0|x)\leq\alpha$.
\end{theorem}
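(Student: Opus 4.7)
The plan is to proceed in direct analogy to the proof of Theorem \ref{centthm}, but the decision space now has only two elements and the loss is a simple weighted $0$--$1$ loss, so the computation is shorter. First I would write out the posterior expected loss for each of the two possible decisions. By absolute continuity, $\P(\theta\in\Theta_0|x)+\P(\theta\in\Theta_1|x)=1$, so
\[
\mathrm{E}(L_{(2)}(\theta,0)|x)=b\,\P(\theta\in\Theta_1|x)=\alpha\bigl(1-\P(\theta\in\Theta_0|x)\bigr),
\]
\[
\mathrm{E}(L_{(2)}(\theta,1)|x)=a\,\P(\theta\in\Theta_0|x)=(1-\alpha)\P(\theta\in\Theta_0|x).
\]

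Next I would identify the Bayes decision rule by comparing the two expected losses. The rule rejects $\Theta_0$ whenever $\mathrm{E}(L_{(2)}(\theta,1)|x)\leq \mathrm{E}(L_{(2)}(\theta,0)|x)$, i.e. whenever $(1-\alpha)\P(\theta\in\Theta_0|x)\leq \alpha\bigl(1-\P(\theta\in\Theta_0|x)\bigr)$. Collecting terms, the $\alpha$'s cancel nicely and the condition simplifies to $\P(\theta\in\Theta_0|x)\leq \alpha$. This is the posterior-probability formulation stated at the end of the theorem.

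Finally I would translate this back into the credible-bound formulation. Since $\Theta_0=\{\theta\leq\theta_0\}$, the quantity $\P(\theta\in\Theta_0|x)$ is the posterior CDF evaluated at $\theta_0$. By the definition of the quantile function $q_\alpha(\theta|x)$ and absolute continuity of the posterior, $\P(\theta\leq\theta_0|x)\leq\alpha$ if and only if $\theta_0\leq q_\alpha(\theta|x)$, which is exactly the statement that $\theta_0\notin\{\theta:\theta\geq q_\alpha(\theta|x)\}$. Both equivalent formulations in the theorem then follow.

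There is no real obstacle here; the only minor care needed is handling the boundary case $\P(\theta\in\Theta_0|x)=\alpha$ consistently (choosing to reject on the boundary matches the non-strict $\leq\alpha$ in the statement) and noting that the absolute continuity of the posterior makes this event have prior-predictive probability zero, so it does not affect the Bayes risk.
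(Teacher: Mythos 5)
Your proof is correct and is essentially the argument the paper intends: the paper omits the proof of this theorem, stating only that it is ``in analogue with that of Theorem \ref{centthm},'' and your computation (posterior expected loss of each decision, comparison, cancellation of the $\alpha\P(\theta\in\Theta_0|x)$ terms, then translation into the quantile/credible-bound form) is exactly that analogous argument carried out. The only quibble is the boundary case $\P(\theta\in\Theta_0|x)=\alpha$, where $\theta_0=q_\alpha(\theta|x)$ technically lies \emph{in} the set $\{\theta:\theta\geq q_\alpha(\theta|x)\}$ while the posterior-probability criterion says reject --- but this discrepancy is already present in the theorem statement itself and, as you note, is immaterial to the Bayes risk.
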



This is a standard one-sided test and the credible set test is thus in agreement with the standard methods. The corresponding justification for upper-bound credible sets is given by considering testing the hypothesis $\Theta_0=\{\theta:\theta\geq \theta_0\}$ against $\Theta_1=\{\theta:\theta<\theta_0\}$ instead.

For a general composite hypothesis $\theta\in\Theta_0$ with the alternative $\theta\in\Theta_1=\Theta\backslash\Theta_0$, a correspondence between hypothesis testing and credible test can be established using a loss function where falsely accepting the null hypothesis is considered to be much worse than falsely rejecting it. Such a loss can be of considerable use in situations where false acceptance of the null hypothesis can be costly, for instance when screening for diseases, malicious web pages or signs of terrorist activity.

\begin{theorem}\label{allthm}
For the hypotheses $\Theta_0$ and $\Theta_1=\Theta\backslash\Theta_0$, with $\P(\Theta_0), \P(\Theta_1)>0$, let $\varphi$ be a test function, such that $\Theta_0$ is rejected if $\varphi=1$ and accepted if $\varphi=0$. Under an absolutely continuous prior for $\theta$ and the loss function $L_{(2)}$ with $a=\alpha$ and $b=1-\alpha$, $0<\alpha<1/2$, the Bayes test is to reject $\Theta_0$ if and only if there exists at least one $\alpha$ credible set that does not contain a non-null subset of $\Theta_0$, or, equivalently, if and only if $\P(\theta\in\Theta_0|x)\leq1-\alpha$.
\end{theorem}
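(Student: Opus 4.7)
The plan is to mirror the structure of the proof of Theorem \ref{centthm}: write down the posterior expected loss of each of the two available actions under $L_{(2)}$ and identify the Bayes action by comparing them. First I would observe that under $L_{(2)}$ with $a=\alpha$, $b=1-\alpha$, the expected posterior loss of $\varphi=1$ is $\alpha\,\P(\theta\in\Theta_0\mid x)$ and that of $\varphi=0$ is $(1-\alpha)\,\P(\theta\in\Theta_1\mid x)=(1-\alpha)\bigl(1-\P(\theta\in\Theta_0\mid x)\bigr)$, since $\Theta_0$ and $\Theta_1$ partition $\Theta$.

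Next I would compare these two quantities. Setting the acceptance loss less than or equal to the rejection loss gives $(1-\alpha)\bigl(1-\P(\theta\in\Theta_0\mid x)\bigr)\leq \alpha\,\P(\theta\in\Theta_0\mid x)$, which rearranges to $\P(\theta\in\Theta_0\mid x)\geq 1-\alpha$. Hence the Bayes rule accepts $\Theta_0$ precisely when $\P(\theta\in\Theta_0\mid x)\geq 1-\alpha$ and rejects it when $\P(\theta\in\Theta_0\mid x)\leq 1-\alpha$, which is the second half of the claim. The hypothesis $\alpha<1/2$ enters only to ensure that $a<b$, guaranteeing that the rejection threshold $1-\alpha$ lies above $1/2$ and matches the motivation that false acceptance is the costlier error.

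Finally I would translate the posterior-probability threshold into the credible-set formulation by arguing the two directions. If $\P(\theta\in\Theta_0\mid x)\leq 1-\alpha$ then $\P(\theta\in\Theta_1\mid x)\geq\alpha$, so $C=\Theta_1$ is itself an $\alpha$ credible set that meets $\Theta_0$ only in a posterior-null set. Conversely, given such a $C$ with $\P(\theta\in C\mid x)\geq\alpha$ and $\P(\theta\in C\cap\Theta_0\mid x)=0$, one has $\P(\theta\in\Theta_1\mid x)\geq\P(\theta\in C\mid x)\geq\alpha$, hence $\P(\theta\in\Theta_0\mid x)\leq 1-\alpha$. The only real wrinkle here — and the place where care is required — is pinning down the correct reading of ``non-null subset'' as ``posterior probability zero''; once that convention is fixed the equivalence is immediate, which is presumably why the authors, having already given the analogous argument in Theorem \ref{centthm}, choose to omit the proof.
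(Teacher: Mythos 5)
Your proposal is correct, and its first half---computing the two posterior expected losses $\alpha\,\P(\theta\in\Theta_0|x)$ and $(1-\alpha)\bigl(1-\P(\theta\in\Theta_0|x)\bigr)$ and comparing them to obtain the acceptance threshold $\P(\theta\in\Theta_0|x)\geq 1-\alpha$---is exactly the content of the paper's first step, which simply cites Proposition 5.2.2 of \citet{ro2} rather than redoing the two-line calculation. Where you genuinely diverge is in the credible-set reformulation. The paper works with credible sets of posterior probability at least $1-\alpha$ and argues by inclusion--exclusion that acceptance forces $\P(\Theta_0\cap\Theta_c|x)\geq(1-\alpha)+(1-\alpha)-1=1-2\alpha>0$ for every such set (this bound, not the ordering $a<b$, is where the hypothesis $\alpha<1/2$ actually does work in the paper's argument), and it then asserts the converse direction without proof. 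You instead read the statement's ``$\alpha$ credible set'' literally as a set of posterior probability at least $\alpha$, and under that reading you establish both directions cleanly: $\Theta_1$ itself is the witness set when $\P(\theta\in\Theta_0|x)\leq 1-\alpha$, and any witness $C$ with $\P(C\cap\Theta_0|x)=0$ forces $\P(\theta\in\Theta_1|x)\geq\P(C|x)\geq\alpha$. Your version is in fact the more defensible one: under the paper's reading the unproved converse fails (with $\alpha=0.05$ and $\P(\theta\in\Theta_0|x)=0.5$ the test rejects, yet every credible set of posterior probability at least $0.95$ meets $\Theta_0$ in a set of posterior probability at least $0.45$), whereas under your reading the equivalence holds exactly and $\alpha<1/2$ is not needed for it. So your argument is not merely a valid alternative but closes a gap in the paper's own proof; the one thing you should state explicitly is that your convention for ``$\alpha$ credible set'' differs from the $1-\alpha$ convention the paper adopts inside its proof.
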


\begin{proof}
By Proposition 5.2.2  of \citet{ro2}, the Bayes test is to accept $\Theta_0$ if $\P(\theta\in\Theta_0|x)\geq 1-\alpha$. Now, a credible set $\Theta_c$ is a set such that $\P(\Theta_c|x)\geq 1-\alpha$. Thus, by definition, if $\Theta_0$ is such that $\P(\Theta_0|x)\geq 1-\alpha$, $\Theta_c$ can be a credible set only if $\P(\Theta_0\cap\Theta_c|x)>0$. Hence, we accept the null hypothesis if and only if \emph{each} $1-\alpha$ credible set contains a non-null subset of $\Theta_0$.
\end{proof}
Once again, this is a standard test, albeit a less commonly used one. Using this loss function corresponds to the statistician being either unsure about which credible set to report or unwilling to accept $\Theta_0$ if there exists at least one way to look at the data that puts $\Theta_0$ to question.


\subsection*{Acknowledgments}
The author wishes to thank Silvelyn Zwanzig for valuable suggestions that helped improve the exposition of the paper, Christian Robert for pointing out a crucial mistake in an earlier draft and Paulo Marques, who helped provide some key references in an early stage of this work.


\end{document}